\definecolor{Myblue}{rgb}{0.0,0,0.9}
\definecolor{Mygreen}{rgb}{0.2,1,0}
\newtheorem{thm}{Theorem}[section]
\newtheorem{prop}[thm]{Proposition}
\newtheorem{cor}[thm]{Corollary}
\theoremstyle{definition}
\theoremstyle{definition}
\numberwithin{equation}{section}
\newcommand{\benu}{\begin{enumerate}}
\newcommand{\enu}{\end{enumerate}}
\newcommand{\bema}{\left[\begin{array}}
\newcommand{\ema}{\end{array}\right]}
\newcommand{\End}{\operatorname{End}}
\newcommand{\Ext}{\operatorname{Ext}}
\newcommand{\HVCenter}[1]{\setbox 0=\hbox{#1}%
        \dimen0=\wd0%
        \dimen1=\ht0%
        \divide\dimen0 by 2%
        \divide\dimen1 by 2%
        \hskip -\dimen0%
        \lower \dimen1%
        \box0%
        \hskip -\dimen0}
\newcommand{\HBCenter}[1]{\setbox 0=\hbox{#1}%
        \dimen0=\wd0%
        \dimen1=\ht0%
        \divide\dimen0 by 2%
        \hskip -\dimen0%
        \box0%
        \hskip -\dimen0}
\newcommand{\LTCenter}[1]{\setbox 0=\hbox{#1}%
        \dimen1=\ht0%
        \lower \dimen1%
        \box0%
        \hskip -\dimen0}
\newcommand{\HTCenter}[1]{\setbox 0=\hbox{#1}%
        \dimen0=\wd0%
        \dimen1=\ht0%
        \divide\dimen0 by 2%
        \hskip -\dimen0%
        \lower \dimen1%
        \box0%
        \hskip -\dimen0}
\newcommand{\RTCenter}[1]{\setbox 0=\hbox{#1}%
        \dimen0=\wd0%
        \dimen1=\ht0%
        \hskip -\dimen0%
        \lower \dimen1%
        \box0%
        \hskip -\dimen0}
\newcommand{\RBCenter}[1]{\setbox 0=\hbox{#1}%
        \dimen0=\wd0%
        \dimen1=\ht0%
        \hskip -\dimen0%
        \box0%
        \hskip -\dimen0}
\newcommand{\RVCenter}[1]{\setbox 0=\hbox{#1}%
        \dimen0=\wd0%
        \dimen1=\ht0%
        \divide\dimen1 by 2%
        \hskip -\dimen0%
        \lower \dimen1%
        \box0%
        \hskip -\dimen0}
\newcommand{\LVCenter}[1]{\setbox 0=\hbox{#1}%
        \dimen1=\ht0%
        \divide\dimen1 by 2%
        \lower \dimen1%
        \box0%
        \hskip -\dimen0}
\begin{document}
\sloppy

\title{ A note on the spectral properties of cluster algebras}

\author[Fern\'andez]{Elsa Fern\'andez}
\address{Elsa Fern\'andez, 
Facultad de Ingenier\'{\i}a, Universidad Nacional de la
  Patagonia San Juan Bosco, 9120 Puerto Madryn, Argentina.}
\email{elsafer9@gmail.com}

\author[Platzeck]{Mar\'{\i}a In\'es Platzeck}
\address{Mar\'{\i}a In\'es Platzeck, Instituto de
    Matem\'atica, Universidad Nacional del Sur, 8000, Bah\'{\i}a
    Blanca, Argentina.}
\email{platzeck@uns.edu.ar}

\begin{abstract}
  Let $Q$ be a finite quiver without oriented cycles and $k$ an algebraically closed field.In this paper we establish a connection between cluster algebras and the representation theory of the path algebra $kQ$, in terms of the spectral properties of the quivers mutation equivalent to $Q$.
\end{abstract}

\thanks{The authors thankfully acknowledge partial support from CONICET and
  from Universidad Nacional del Sur, Argentina. M.~I.~Platzeck is a researcher from CONICET, Argentina}

\subjclass[2000]{Primary:
16G20, 
Secondary: 16G70, 
18E30 
}
\maketitle

\section{Introduction}
The theory of cluster algebras goes back to the work of S. Fomin and A. Zelevinsky (\cite{FZ1}, \cite{FZ2}). One of the main ingredients in the study of this subject is the notion of mutation of quivers.

On  the other hand, there is the theory of graph spectra.  This theory can be used to obtain information about the graph structure. In particular, spectra of graphs are interesting in representation theory of finite dimensional algebras. Works along these lines are, for example,  \cite{CDS}, \cite{G}, \cite{JAP}, \cite{JAPT}, \cite{T} \cite {F} and \cite{V}.
 
In this paper we associate a numerical invariant to the mutation class of a quiver $Q$, in terms of the spectral radius of $Q$. Our first result gives an alternative characterization of cluster algebras of finite type using this invariant. The second establishes a connection between cluster algebras and the representation theory of path algebras.

\section{Preliminaries}

\subsection{The spectral radius}
\label{subsec:spectral radius}
Let $\Delta = (\Delta_0, \Delta_1)$ be a finite graph where $\Delta_0$ is the set of vertices and $\Delta_1$ the set of edges. We may assume that $\Delta_0 = \{1, 2, \dots, n\}$. Recall that {\it the adjacency matrix} $A_\Delta$ is the integer $n\times n$ matrix defined as follows:

$$( A_\Delta)_{ij}=\biggl\lbrace
\begin{matrix}
\text{number of edges between $i$ and $j$,}& \text {if }i\neq j\  \\ 
\text{two times the number of loops at $i$,}& \text{ if }  i=j\ .
\end{matrix}
$$ 

Clearly, $A_\Delta$ is a symmetric matrix. Let $\rho (A_\Delta)$ be {\it the spectral radius} of $A_\Delta$, that is, $\rho(A_\Delta) = {\rm max} |\lambda|$, where $\lambda$ runs over all the eigenvalues of $A_\Delta$.

The characteristic polynomial and the spectral radius of $A_\Delta$ are called the characteristic polynomial and the spectral radius of $\Delta$, respectively.
Also, the adjacency matrix and the spectral radius $\rho (Q)$  of a finite quiver $Q$ are defined as the adjacency matrix and the spectral radius of the corresponding underlying graph. 

The following is a useful characterization of simply laced Dynkin diagrams and simply laced extended Dynkin diagrams in terms of the spectral radius.

\begin{prop} \label{Dynkin} {\rm (\cite{CDS})}
Let $\Delta$ be a finite connected graph. Then
\begin{itemize}
 \item[{\rm (a)}]  $\Delta$ is a Dynkin diagram if and only if $\rho (\Delta) < 2$.

  \item[{\rm (b)}]  $\Delta$ is an extended Dynkin diagram if and only if $\rho (\Delta) = 2$.

  \end{itemize}

\end{prop}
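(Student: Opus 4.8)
The plan is to reduce both equivalences to Perron--Frobenius theory for the matrix $A_\Delta$ together with the combinatorial classification of Dynkin diagrams. Since $\Delta$ is connected, $A_\Delta$ is a non-negative symmetric irreducible matrix, so Perron--Frobenius applies and gives: (i) $\rho(\Delta)$ is an eigenvalue of $A_\Delta$ with a strictly positive eigenvector; (ii) conversely, every strictly positive eigenvector of $A_\Delta$ has eigenvalue $\rho(\Delta)$; and (iii) strict monotonicity --- if $\Gamma$ is a connected graph that is a proper subgraph of $\Delta$, then $\rho(\Gamma) < \rho(\Delta)$. For (iii): deleting an edge replaces $A_\Delta$ by some $B$ with $0 \le B \le A_\Delta$ and $B \ne A_\Delta$, which strictly lowers $\rho$ because $A_\Delta$ is irreducible; deleting a vertex passes to a principal submatrix, which again strictly lowers $\rho$. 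I would record (i)--(iii) as a preliminary lemma.

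Next I would compute $\rho(\widetilde\Delta) = 2$ for every extended Dynkin diagram $\widetilde\Delta$. By (ii) it suffices, for each $\widetilde\Delta \in \{\widetilde A_n,\widetilde D_n,\widetilde E_6,\widetilde E_7,\widetilde E_8\}$, to produce one strictly positive vector $v$ with $A_{\widetilde\Delta}\,v = 2v$, i.e.\ with $\sum_{j\sim i} v_j = 2 v_i$ at each vertex $i$: such a $v$ must then be a Perron vector, whence $\rho(\widetilde\Delta)=2$. One takes the standard marks: $v=(1,\dots,1)$ on the cycle $\widetilde A_n$ (every vertex has two neighbours), weight $1$ on the four extreme vertices and $2$ along the central path of $\widetilde D_n$, and the classical Coxeter labels for $\widetilde E_6,\widetilde E_7,\widetilde E_8$; the identity $A_{\widetilde\Delta}\,v=2v$ is then a finite check in each type. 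Together with this, the implication ``$\Delta$ Dynkin $\Rightarrow \rho(\Delta)<2$'' follows at once: each simply laced Dynkin diagram is a connected proper subgraph of the extended diagram of the same name (delete the affine vertex), so (iii) gives $\rho(\Delta)<\rho(\widetilde\Delta)=2$.

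For the converse implications I would invoke the classification in the shape of the trichotomy: \emph{every connected finite graph is a Dynkin diagram, or an extended Dynkin diagram, or properly contains an extended Dynkin diagram as a subgraph}. Granting this, suppose first $\rho(\Delta)<2$; then by the previous paragraph and (iii), $\Delta$ can neither equal nor properly contain an extended Dynkin diagram (each of those has spectral radius $2$, and a connected proper overgraph of one has spectral radius $>2$), so $\Delta$ is a Dynkin diagram; this proves (a). Suppose next $\rho(\Delta)=2$; then $\Delta$ is not Dynkin, and by (iii) it cannot properly contain an extended Dynkin diagram, so $\Delta$ is itself extended Dynkin; this proves (b).

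The main obstacle is the trichotomy, which is really the Dynkin classification. I would prove it by a case analysis on the shape of $\Delta$. If $\Delta$ has a cycle, a shortest cycle is chordless and hence a copy of some $\widetilde A_m$; so assume $\Delta$ is a tree. If $\Delta$ has a vertex of degree $\ge 4$ or two vertices of degree $\ge 3$, then it contains some $\widetilde D_m$. Otherwise $\Delta$ is a tree with at most one vertex of degree $3$: either a path $A_n$, or a ``triple star'' $Y_{a,b,c}$ with legs of $a\le b\le c$ vertices. For the latter one checks that $Y_{a,b,c}$ is a Dynkin diagram exactly when $\tfrac{1}{a+1}+\tfrac{1}{b+1}+\tfrac{1}{c+1}>1$ (the cases $D_n, E_6, E_7, E_8$), that equality characterises $\widetilde E_6=Y_{2,2,2}$, $\widetilde E_7=Y_{1,3,3}$, $\widetilde E_8=Y_{1,2,5}$, and that when the sum is $<1$ the star properly contains one of these three. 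This is the only genuinely lengthy step; alternatively, since the proposition is quoted from \cite{CDS}, one may simply cite it there.
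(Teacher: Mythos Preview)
Your argument is correct and follows the standard Perron--Frobenius route: exhibit the additive function on each extended Dynkin diagram to get $\rho=2$ there, deduce $\rho<2$ for the Dynkin diagrams by strict monotonicity under passing to connected proper subgraphs, and close with the trichotomy that any finite connected graph is Dynkin, extended Dynkin, or properly contains an extended Dynkin subgraph. The case analysis you sketch for the trichotomy is the usual one and is sound.

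There is, however, nothing in the paper to compare your proof against: the authors do not prove Proposition~\ref{Dynkin} at all. Immediately after the statement they simply record that it appears as an exercise in \cite{CDS} (and in \cite{JAP}) and that a proof is given in \cite{F}. The only point of contact is that the monotonicity principle you isolate as item~(iii) is exactly the tool the authors later cite from \cite{JAP} in the proof of their second main theorem, so your framework is fully consistent with the paper's toolkit.
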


The result  is given  as an exercise in \cite{CDS} (see also \cite [V.1.3, Ejercicio 5]{JAP}). A proof is given in \cite{F}.

\vskip.2in
Throughout the paper $k$ denotes an algebraically closed field and all quivers are finite. The next result shows that we can get information about the representation type of $kQ$ in terms of the spectral radius of the associated adjacency matrix.

\begin{thm}  {\rm (\cite{T})}
Let $Q$ be a connected quiver without oriented cycles. Then 

\begin{itemize}
 \item[{\rm (a)}]  $kQ$ is of finite representation type if and only if $\rho (Q) < 2$.

  \item[{\rm (b)}]  $kQ$ is of tame representation type if and only if $\rho (Q) = 2$.

   \item[{\rm (c)}]  $kQ$ is of wild representation type if and only if $\rho (Q) > 2$.
\end{itemize}
\end{thm}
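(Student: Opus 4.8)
The plan is to reduce the statement to the classical classification theorems of the representation theory of hereditary algebras, combined with Proposition~\ref{Dynkin}. Since $Q$ is a connected quiver without oriented cycles, $kQ$ is a finite-dimensional hereditary $k$-algebra, and its representation type depends only on the underlying graph $\Delta$ of $Q$ (reversing an arrow gives a path algebra with an equivalent representation theory). So it suffices to characterize, purely in terms of $\rho(Q)=\rho(\Delta)$, when $\Delta$ is a Dynkin diagram, when it is an extended Dynkin diagram, and when it is neither.

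First I would invoke Gabriel's theorem: $kQ$ is of finite representation type if and only if $\Delta$ is a simply laced Dynkin diagram. Combining this with Proposition~\ref{Dynkin}(a), which says that $\Delta$ is Dynkin exactly when $\rho(\Delta)<2$, yields part (a). Next I would invoke the classification of the tame hereditary algebras due to Nazarova and to Donovan--Freislich: $kQ$ is of tame (infinite) representation type if and only if $\Delta$ is an extended Dynkin diagram. Together with Proposition~\ref{Dynkin}(b), which says $\Delta$ is extended Dynkin exactly when $\rho(\Delta)=2$, this gives part (b).

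For part (c) I would use Drozd's tame--wild dichotomy: every finite-dimensional algebra over the algebraically closed field $k$ is of exactly one of the three representation types, finite, tame, or wild. Hence $kQ$ is wild if and only if it is neither finite nor tame, i.e., by (a) and (b), if and only if $\rho(Q)\geq 2$ and $\rho(Q)\neq 2$; since $\rho(Q)$ is a real number this is equivalent to $\rho(Q)>2$. Note that this last step also uses that the three cases $\rho(Q)<2$, $\rho(Q)=2$, $\rho(Q)>2$ are exhaustive and mutually exclusive, so that the three biconditionals are consistent and may be proved as a single package.

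There is no genuine technical obstacle here: the real content lies entirely in the three deep external inputs — Gabriel's theorem, the Nazarova / Donovan--Freislich classification of tame hereditary algebras, and Drozd's dichotomy — whose combination with the spectral description of Dynkin and Euclidean graphs in Proposition~\ref{Dynkin} immediately produces the stated trichotomy. The only points requiring care are bookkeeping ones: matching the conventions for the term ``tame'' (here understood as excluding finite type, consistently with the statement), and checking that the underlying graph of a connected acyclic quiver is itself connected, so that Proposition~\ref{Dynkin} and the Perron--Frobenius-type behaviour of the adjacency matrix apply as stated.
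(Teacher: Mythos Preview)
Your proposal is correct and matches the paper's own argument essentially line by line: the paper also derives (a) from Gabriel's theorem together with Proposition~\ref{Dynkin}(a), (b) from the Donovan--Freislich/Nazarova classification together with Proposition~\ref{Dynkin}(b), and then obtains (c) from (a) and (b). Your explicit mention of Drozd's dichotomy just makes precise the trichotomy the paper uses implicitly when it says ``(c) follows from (a) and (b).''
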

\begin{proof}: The result is a straightforward consequence of \cite{T}. It can be proven also in the following way. Part (a) follows from Proposition \ref{Dynkin} (a)    and from Gabriel's Theorem »\cite{G}, and (b) is a consequence of (b) in the same proposition and the characterization of tame algebras given by Donovan-Freislich (\cite{DF} and Nazarova \cite{N}. Finally, (c) follows from (a) and (b).
\end{proof}

\subsection{Mutations of quivers, cluster algebras and cluster tilted algebras}
\label{subsec:quivers}

The interest in mutation of quivers goes back to the work of Fomin and Zelevinsky in \cite{FZ1} were they introduced cluster algebras.
For a finite quiver without loops or $2$-cycles, and with vertex set $\{ 1,2, \dots , n\}$, we denote by $b_{ij}$ the number of arrows $i \rightarrow j$ minus the number of arrows $j\rightarrow i$ in $Q$. We observe that at least one of these numbers is zero, because $Q$ does not have $2$-cycles.

Let $s$ be a vertex of $Q$. Recall from \cite{FZ1} that, for any vertex $s$, the mutation of $Q$ in direction $s$ is another quiver $\mu_s(Q) = Q'$ defined as follows:

$$ b'_{ij}=\biggl\lbrace
\begin{matrix}
{-b_{ij}  }& \text {if }i=s  \text { or } j=s\  \\ 
b_{ij}+ \text { sgn }(b_{is})[b_{is}b_{sj}]_+,& \text{ otherwise}
\end{matrix}  $$
where $[x]_+ = $ max $\{x,0\}$.

Note that mutation at $s$ is an involution. The mutation class of a quiver $Q$ is the set of all quivers obtained from it by iterated mutations.

We recall that the cluster algebra $\mathcal A(Q)$ associated with the quiver $Q$ is the $\mathbb {Q}$- subalgebra of the field of rational functions $\mathbb Q (x_1, \dots , x_n)$ over generated by a family of distinguished generators called the cluster variables. Here $\mathbb Q$ denotes the field of rational numbers. When the set of cluster variables is finite then $\mathcal A(Q)$ is said to be of finite type. For a precise definition and facts about cluster algebras  we refer the reader to \cite{FZ1}, \cite{FZ2}.

We also recall that cluster algebras associated to quivers which are mutation equivalent are isomorphic.

We will  need the notion of cluster tilted algebra. These algebras were introduced by Buan, Marsh and Reiten  as the endomorphism rings of cluster-tilting  objects in the cluster category of  $kQ$, and their quivers correspond  with the quivers in the mutation class of $Q$   (see \cite {BMRRT}, \cite{BMR2}, \cite{BMR3}).


\section{The main results}
\label{sec:roll}

In this section we state and prove our main results. Let $Q$ be a connected quiver without loops or $2$-cycles. We denote by $\epsilon (Q)$ the infimum of the spectral radius of the quivers mutation-equivalent to $Q$.

The invariant $\epsilon (Q)$ is not bounded. In fact, $\epsilon (Q_1)=m$ if $Q_1$ is the quiver \xymatrix{ {\bullet} \ar @{->} @< 6pt> [r]
\ar @{.} @< 3pt> [r]
\ar @{.} @< 0pt> [r]
\ar @{.} @< -3pt> [r]
\ar @{->} @< -6pt> [r]& {\bullet}}
with two vertices and $m$ arrows. However, we will show that $\epsilon (Q)<2$ for quivers $Q$ mutation equivalent to a Dynkin quiver.

An important result of Fomin and Zelevinsky states that a cluster algebra $\mathcal A(Q)$ is of finite type if and only if $Q$ is mutation equivalent to an orientation of a simple laced Dynkin diagram. We will give  an alternative  characterization of cluster algebras of finite type in the following theorem.
\vskip .12in
\begin{thm}
Let $Q$ be a connected quiver without loops or $2$-cycles. The following are equivalent
\begin{itemize}
\item[{\rm (a)}]  The cluster algebra $\mathcal A(Q)$ is of finite type. 
\item[{\rm (b)}] $\epsilon(Q) <2$.
\end{itemize}
\end{thm}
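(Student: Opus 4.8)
The plan is to prove the two implications separately, using the Fomin–Zelevinsky classification of finite type cluster algebras as the bridge, together with the spectral characterization of simply laced Dynkin diagrams in Proposition \ref{Dynkin}.

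For the direction $(a)\Rightarrow(b)$, assume $\mathcal A(Q)$ is of finite type. By the Fomin–Zelevinsky theorem quoted just above the statement, $Q$ is mutation equivalent to some orientation $Q'$ of a simply laced Dynkin diagram $\Delta$. Since $\mathcal A(Q)$ depends only on the mutation class, we may work with $Q'$. Now the key point is that mutation of a quiver in the Dynkin mutation class cannot increase the spectral radius past $2$: more precisely, every quiver mutation equivalent to a Dynkin quiver is again (the orientation of) a Dynkin diagram. This follows from the cluster-tilted algebra picture recalled in Subsection \ref{subsec:quivers}: the quivers in the mutation class of $Q'$ are exactly the quivers of the cluster-tilted algebras of type $\Delta$, and these are known to have underlying graph a disjoint union of Dynkin diagrams (indeed, each is of the form $\End_{\mathcal C}(T)$ for a cluster-tilting object $T$ in the cluster category of $k\Delta$, which is of finite type). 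Hence for every $Q''$ mutation equivalent to $Q$ the underlying graph is Dynkin, so $\rho(Q'')<2$ by Proposition \ref{Dynkin}(a). Taking the infimum over the mutation class gives $\epsilon(Q)\le\rho(Q')<2$. In fact one gets the stronger statement, advertised in the paragraph before the theorem, that $\epsilon(Q)<2$ with the infimum attained.

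For the direction $(b)\Rightarrow(a)$, assume $\epsilon(Q)<2$. Then there is a quiver $Q'$ in the mutation class of $Q$ with $\rho(Q')<2$. By Proposition \ref{Dynkin}(a) the underlying graph of $Q'$ is a (union of) simply laced Dynkin diagram(s); since $Q$ is connected and mutation preserves connectedness, $Q'$ is connected, so it is an orientation of a single simply laced Dynkin diagram. By the Fomin–Zelevinsky theorem, $\mathcal A(Q')$ is of finite type, and since $\mathcal A(Q)\cong\mathcal A(Q')$ because $Q$ and $Q'$ are mutation equivalent, $\mathcal A(Q)$ is of finite type as well.

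The main obstacle is the first implication, specifically the claim that mutation does not push the spectral radius to $2$ or above inside the Dynkin mutation class, i.e.\ that the whole mutation class of a Dynkin quiver consists of Dynkin diagrams (equivalently, that quivers of cluster-tilted algebras of finite type have Dynkin underlying graph). One must be a little careful here: this is not a general property of mutation — as the two-vertex example with $m$ arrows before the theorem shows, mutation can certainly create large spectral radius in other mutation classes — so the argument genuinely uses that we start in a finite-type class. I would either cite the relevant statement from \cite{BMRRT}, \cite{BMR2}, \cite{BMR3} on cluster-tilted algebras of finite type, or, if a self-contained argument is wanted, invoke the Fomin–Zelevinsky classification of mutation classes of finite type (the quivers mutation equivalent to a Dynkin quiver are exactly those arising from triangulations/arcs in the finite-type combinatorial model, all of which are again Dynkin). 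Once that structural fact is in hand, both directions are short.
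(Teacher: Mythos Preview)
Your direction $(b)\Rightarrow(a)$ is correct and coincides with the paper's argument (the paper phrases it as a contrapositive, but the content is identical: if some $Q'$ in the mutation class has $\rho(Q')<2$, then $Q'$ is Dynkin by Proposition~\ref{Dynkin}(a), and Fomin--Zelevinsky then gives that $\mathcal A(Q)\cong\mathcal A(Q')$ is of finite type).

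Your direction $(a)\Rightarrow(b)$, however, rests on a claim that is false. It is \emph{not} true that every quiver mutation equivalent to a Dynkin quiver has Dynkin underlying graph, nor that cluster-tilted algebras of Dynkin type have quivers whose underlying graphs are Dynkin. For a concrete counterexample, mutate the linearly oriented $A_3$ quiver $1\to 2\to 3$ at the middle vertex: the result is the oriented $3$-cycle, whose underlying graph is the triangle, i.e.\ the extended Dynkin diagram $\widetilde A_2$, with spectral radius exactly $2$. In general the quivers of cluster-tilted algebras of Dynkin type contain oriented cycles and are not trees, so the structural statement you propose to extract from \cite{BMRRT}, \cite{BMR2}, \cite{BMR3} does not hold.

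Fortunately this claim is also unnecessary, and the paper does not make it. The paper's proof of $(a)\Rightarrow(b)$ is precisely the one-line inequality you write at the end: since the Dynkin quiver $Q'$ lies in the mutation class of $Q$, one has $\epsilon(Q)\le\rho(Q')<2$. The infimum needs only a single witness, not control over the entire mutation class, so there is no ``main obstacle'' at all. Everything you identify as the key difficulty should simply be deleted; what remains is exactly the paper's proof.
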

\begin{proof}
Assume that $\mathcal A(Q)$ is of finite type. Then there exists a simply laced Dynkin quiver $Q'$ such that $Q$ is mutation equivalent to $Q'$.
By Proposition \ref{Dynkin} we have $\rho(Q') <2$, and therefore $\epsilon (Q)<2$.

Conversely, if (a) does not hold then $\rho(Q') \geq 2$ for every $Q'$ in the mutation class of $Q$. Thus 
$\epsilon(Q) \geq 2$, a contradiction.
\end{proof}
\vskip .07 in

Now we give a result which illustrates the relationship 
 between cluster algebras and the representation theory of path algebras.
\vskip .12in
\begin{thm}
Let $Q$ be a connected quiver without oriented cycles. Then 
\begin{itemize}
\item[{\rm (a)}]  $kQ$ is of finite representation type if and only if $\epsilon (Q) <2$.

  \item[{\rm (b)}]  $kQ$ is of tame representation type if and only if $\epsilon (Q) =2$.
\item[{\rm (c)}]  $kQ$ is of wild representation type if and only if $\epsilon (Q) >2$.
\end{itemize}
\end{thm}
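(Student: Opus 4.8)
The plan is to reduce the statement to the two theorems already available: Theorem 2.2 (representation type of $kQ$ in terms of $\rho(Q)$) and Theorem 3.2 (finite type of $\mathcal{A}(Q)$ in terms of $\epsilon(Q)<2$), together with the classical dichotomy that, for a connected quiver $Q$ without oriented cycles, $kQ$ is of finite, tame, or wild representation type according to whether the underlying graph is Dynkin, extended Dynkin, or neither. The key observation making the whole argument work is that the representation type of $kQ$ depends only on the underlying graph of $Q$, whereas $\epsilon(Q)$ depends only on the mutation class of $Q$; so I must show these two pieces of data determine each other in the relevant ranges.

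First I would prove (a). If $kQ$ is of finite representation type, then by Theorem 2.2(a) the underlying graph of $Q$ is a simply laced Dynkin diagram, hence $Q$ is an orientation of such a diagram; by the Fomin--Zelevinsky classification $\mathcal{A}(Q)$ is of finite type, so by Theorem 3.2 we get $\epsilon(Q)<2$. (Alternatively one can bypass the cluster-algebra step: since $Q$ itself is Dynkin, $\rho(Q)<2$ by Proposition 2.1(a), and $\epsilon(Q)\le\rho(Q)<2$.) Conversely, suppose $\epsilon(Q)<2$. Then there is a quiver $Q'$ in the mutation class of $Q$ with $\rho(Q')<2$, so by Proposition 2.1(a) the underlying graph of $Q'$ is Dynkin; hence $\mathcal{A}(Q')$ is of finite type, and since mutation-equivalent quivers give isomorphic cluster algebras, $\mathcal{A}(Q)$ is of finite type. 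By the Fomin--Zelevinsky theorem $Q$ is then mutation equivalent to an orientation of a simply laced Dynkin diagram; but mutation preserves the number of vertices and, for quivers of finite mutation type, the "type" — more directly, the only connected quivers without oriented cycles whose cluster algebra is of finite type are the Dynkin ones, so the underlying graph of $Q$ is Dynkin and $kQ$ is of finite representation type by Theorem 2.2(a). Here I expect the main obstacle: I must justify carefully that $\epsilon(Q)<2$, which only tells me \emph{some} mutation of $Q$ is Dynkin, forces $Q$ \emph{itself} (a quiver with no oriented cycles) to have Dynkin underlying graph; this is where the input from cluster algebras is essential, since $\rho$ is not a mutation invariant, only $\epsilon$ is, and one cannot argue purely spectrally.

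For part (b), I argue similarly: if $kQ$ is tame then by Theorem 2.2(b) the underlying graph of $Q$ is extended Dynkin, so $\rho(Q)=2$ by Proposition 2.1(b), giving $\epsilon(Q)\le 2$; and $\epsilon(Q)<2$ is impossible by part (a) (since that would force $kQ$ finite, not tame), so $\epsilon(Q)=2$. Conversely if $\epsilon(Q)=2$, then $\epsilon(Q)\ge 2$ rules out finite type by part (a), so $kQ$ is tame or wild; if it were wild, then by Theorem 2.2(c) the underlying graph of $Q$ is neither Dynkin nor extended Dynkin, and I would need to see that every quiver $Q'$ in its mutation class then has $\rho(Q')>2$, i.e. $\epsilon(Q)\ge$ something exceeding $2$ — equivalently that a wild path-algebra quiver cannot be mutated to a tame or finite one. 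This again follows from representation-theoretic input: mutation classes of wild quivers without oriented cycles contain no Dynkin or extended Dynkin quiver (otherwise $\mathcal{A}(Q)$ would be finite or the quiver would be one of the known finite-mutation-type tame cases, contradicting wildness as detected by Theorem 2.2). So $\epsilon(Q)>2$ fails only if the class contains an extended Dynkin or Dynkin quiver, which is excluded; hence $\epsilon(Q)=2$ forces $kQ$ tame. Part (c) is then immediate by elimination: $\epsilon(Q)>2$ iff $\epsilon(Q)\ne 2$ and $\epsilon(Q)\not<2$, iff $kQ$ is neither finite nor tame, iff $kQ$ is wild.

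The expository backbone, then, is: (i) establish (a) using Proposition 2.1, Theorem 2.2, Theorem 3.2 and the Fomin--Zelevinsky classification; (ii) deduce the remaining cases by elimination, the only nontrivial extra ingredient being that wildness of $kQ$ prevents the mutation class of $Q$ from containing any extended Dynkin quiver. I anticipate that step (i), specifically the converse direction of (a) — deducing from $\epsilon(Q)<2$ that $Q$ itself has Dynkin type — and the analogous point in (b), are the crux, and they are exactly where the bridge between the mutation-theoretic invariant $\epsilon$ and the graph-theoretic invariant $\rho$ has to be crossed via the theory of cluster algebras rather than by direct spectral estimates.
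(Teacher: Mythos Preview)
Your outline for (a) and for the forward direction of (b) matches the paper's, though where you invoke the blanket fact ``an acyclic quiver mutation equivalent to a Dynkin quiver is itself Dynkin'' the paper actually supplies a proof, via the Assem--Br\"ustle--Schiffler description of cluster-tilted algebras as relation extensions: if $Q$ is acyclic and lies in the mutation class of a Dynkin $Q'$, then $Q$ is the quiver of a cluster-tilted algebra $C\simeq\mathcal R(B)$ with $B$ tilted of type $\underline{Q'}$; acyclicity of $Q$ forces $B$ to be hereditary, whence $C=B=kQ$ and $Q$ is Dynkin. The same device handles the extended-Dynkin case needed in (b).

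There is, however, a genuine gap in your converse for (b). You argue that if $kQ$ were wild then no $Q'$ in the mutation class has $\rho(Q')\le 2$, and then write ``i.e.\ $\epsilon(Q)\ge$ something exceeding $2$'' and ``$\epsilon(Q)>2$ fails only if the class contains an extended Dynkin or Dynkin quiver''. These inferences are not valid as stated: the mutation class $\mathcal M_Q$ is in general infinite, and an infimum over infinitely many values each strictly greater than $2$ may perfectly well equal $2$. Knowing that a wild acyclic quiver cannot be mutated to a (extended) Dynkin one gives only $\rho(Q')>2$ for every $Q'$, not $\inf_{Q'}\rho(Q')>2$. The paper closes precisely this gap with a finiteness argument: with the number of vertices fixed, any quiver with sufficiently many arrows contains a triple arrow between two vertices as a subquiver and hence has spectral radius at least $3$; thus only finitely many $Q''\in\mathcal M_Q$ satisfy $\rho(Q'')<3$, so the infimum $\epsilon(Q)=2$ is actually \emph{attained} at some $Q'\in\mathcal M_Q$. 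Then $\rho(Q')=2$ makes $Q'$ extended Dynkin, and the relation-extension argument transfers this to $Q$. Your proposal provides no substitute for this compactness step, and without it the implication $\epsilon(Q)=2\Rightarrow kQ$ tame (and hence also (c)) is not established.
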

\begin{proof}
(a) Suposse $kQ$ is of finite representation type. Then $\rho(Q) <2$, hence $\epsilon (Q)<2$. To prove the converse we assume that $\epsilon (Q)<2$.
Then there  exists a quiver $Q'$  mutation equivalent to $Q$ such that $\rho (Q') < 2$. By Proposition \ref{Dynkin} we know that $Q'$ is a Dynkin quiver, with type $\underline Q'$. Since $Q$ is in the mutation class of $Q'$, then $Q$ is the quiver of a cluster tilted algebra $C$ of Dynkin type $\underline Q'$. We will show next that $Q$ and $Q'$ are quivers of the same type, so $Q$ is Dynkin and therefore $kQ$ is of finite representation type. We will do this using the notion of relation extension, introduced in \cite{ABS}.

We recall that the relation extension $\mathcal R(B)$ of  a finite dimensional    $k$-algebra $B$  of global dimension at most two was defined in \cite{ABS} as the trivial extension of $B$ by the $B$-$B$-bimodule  $\Ext^2_B (DB,B)$. When $B$ has no oriented cycles, any relation in $B$ gives rise to an oriented cycle in  $\mathcal R(B)$. Thus $\mathcal R(B)$ has oriented cycles, unless $B$ is hereditary, and in this latter case $\mathcal R(B) =B.$

It follows from \cite [3.4]{ABS} that the cluster-tilted algebra $C$ is isomorphic to the relation extension of the tilted algebra $B= \End_{kQ_1}(T)$, where $T$ is a tilting  module over an algebra $kQ_1$ derived equivalent to $kQ$. Then $Q$ and $Q_1$ are quivers of the same Dynkin type, so $B$ is tilted of type $\underline Q$. Since $Q$ has no oriented cycles and  is the quiver of $C \simeq \mathcal R(B)$ it follows that $B$ is hereditary and $\mathcal R(B) = B$. Thus $C\simeq B$ is hereditary, so $C=kQ$.
 On the other hand, $C$ is of Dynkin type $\underline Q'$ and then we conclude that $Q$ and $Q'$ are Dynkin quivers of the same type, as desired.

(b) If $kQ$ is a tame algebra, then by Proposition \ref{Dynkin}, we have $\rho (Q) =2$. Therefore $\epsilon (Q) \leq 2$ and by (a) it  follows that $\epsilon (Q) =2$.

Conversely, let $\epsilon (Q) =2$ and let $ \mathcal M_Q  $ denote the mutation class of $Q$. If there exists a quiver $Q'$  in $\mathcal M_Q$ such that $\rho (Q') =2$, then an argument similar to that used in the proof of (a) leads to the desired statement.
We prove next that such a $Q'$ exists. Assume, to the contrary, that $\rho (Q'') >2$ for any quiver $Q''$  in $\mathcal M_Q$. Since $\epsilon (Q) =2$ then $\rho (Q'') <3$ for infinitely many quivers $Q''$ in $\mathcal M_Q$. 

Let $r$ be a fixed integer and let $\mathcal M_Q^r = \{Q'' \in \mathcal M_Q $ such that $Q''$ has at most $r$ arrows$\}$. The number of vertices of the quivers in $\mathcal M_Q$ is fixed, since it coincides with the number of vertices of $Q$, therefore the set $\mathcal M_Q^r$ is finite. Choosing $m$ large enough we have that any  quiver in $\mathcal M_Q \setminus  \mathcal M_Q^m$ contains   \xymatrix{ {\bullet} \ar @{->} @< 4pt> [r]
\ar @{->} @< 0pt> [r]
\ar @{->} @< -4pt> [r]& {\bullet}}
 as a subquiver. Since by Corollary 1.3 in \cite [V]{JAP}   the spectral radius of a subquiver of a given quiver can not exceed the radius of the quiver,  and $\rho (\xymatrix{ {\bullet} \ar @{->} @< 4pt> [r]
\ar @{->} @< 0pt> [r]
\ar @{->} @< -4pt> [r]& {\bullet}}
) =3$, then $\rho (Q'')\geq 3$ for any $Q''$ in  $\mathcal M_Q \setminus  \mathcal M_Q^m$ .

Therefore $2 = \epsilon (Q) = {\rm inf}  \, \{ \rho(Q''): Q'' \in  \mathcal M_Q^m \}$. Since  $\mathcal M_Q^m$ is a finite set there exists $Q' \in  \mathcal M_Q^m$ such that $\rho (Q') = 2$. This contradicts the assumption that $\rho (Q'') >2$ for any quiver $Q''$  in $\mathcal M_Q$, and ends the proof of (b).

(c) follows from (a) and (b).
\end{proof}
\vskip .2in
\begin{cor} Let $Q$ be a wild quiver without oriented cycles. Then any quiver in the mutation class of $Q$ is wild.

\end{cor}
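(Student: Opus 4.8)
Let $Q$ be a wild quiver without oriented cycles. Then any quiver in the mutation class of $Q$ is wild.

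The plan is to deduce this directly from Theorem 3.3, the spectral characterization of representation type via the invariant $\epsilon(Q)$. First I would observe that $\epsilon$ is by construction an invariant of the mutation class: if $Q'$ is mutation equivalent to $Q$, then $\mathcal M_{Q'} = \mathcal M_Q$, and hence $\epsilon(Q') = \inf\{\rho(Q''): Q'' \in \mathcal M_{Q'}\} = \inf\{\rho(Q''): Q'' \in \mathcal M_Q\} = \epsilon(Q)$. Next, since $kQ$ is wild, part (c) of Theorem 3.3 gives $\epsilon(Q) > 2$. Therefore $\epsilon(Q') = \epsilon(Q) > 2$ as well. Now one subtlety: Theorem 3.3 is stated for quivers without oriented cycles, so to apply part (c) to $Q'$ I need $Q'$ to have no oriented cycles. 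But if some $Q'$ in the mutation class had an oriented cycle, then $Q'$ could not be mutation equivalent to a Dynkin or extended Dynkin quiver; still, the cleaner route is simply that any quiver mutation equivalent to $Q$ with no oriented cycles is wild by Theorem 3.3(c) applied to it, and this already covers every acyclic representative. So I would conclude: for every acyclic $Q'$ in the mutation class, $\epsilon(Q') > 2$, whence $kQ'$ is wild.

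The one genuinely delicate point is what ``$Q'$ is wild'' should mean when $Q'$ has oriented cycles, since $kQ'$ is then infinite-dimensional and the trichotomy finite/tame/wild as stated does not literally apply to $kQ'$. I expect the intended reading of the corollary is that the statement concerns the mutation class of an acyclic wild quiver and asserts that no quiver in that class is mutation equivalent to a Dynkin or extended Dynkin quiver — equivalently, the cluster algebra $\mathcal A(Q)$ is not of finite type and $\epsilon(Q) \neq 2$. Under that reading the corollary is immediate from Theorems 3.2 and 3.3 together with the mutation-invariance of $\epsilon$: $\epsilon(Q') = \epsilon(Q) > 2$ for all $Q'$ in the class, so by Theorem 3.2 the cluster algebra is not of finite type, and by Theorem 3.3 no acyclic representative is of finite or tame representation type.

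In summary, the proof is a three-line argument: (i) $\epsilon$ is constant on a mutation class; (ii) wildness of $kQ$ forces $\epsilon(Q) > 2$ by Theorem 3.3(c); (iii) hence $\epsilon(Q') > 2$ for every $Q'$ in the class, so every acyclic $Q'$ in the class is wild by Theorem 3.3(c) again. The only thing to be careful about is making the scope of the statement precise for cyclic representatives, and I would add a sentence fixing that convention rather than attempting a separate argument.
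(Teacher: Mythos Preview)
The paper gives no explicit proof of this corollary; it is left as an immediate consequence of the preceding theorem (which is Theorem~3.2 in the paper's numbering, not 3.3 --- the corollary itself is 3.3). Your deduction via the mutation-invariance of $\epsilon$ is exactly the intended argument. Your worry about cyclic representatives disappears if one reads ``$Q'$ is wild'' simply as $\rho(Q')>2$ (equivalently, the underlying graph of $Q'$ is neither Dynkin nor extended Dynkin): since $\epsilon(Q)=\inf_{Q''\in\mathcal M_Q}\rho(Q'')>2$, every $Q'\in\mathcal M_Q$ automatically satisfies $\rho(Q')\geq\epsilon(Q)>2$, so there is no need to re-apply the theorem to $Q'$ and hence no acyclicity hypothesis on $Q'$ is required.
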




\end{document}